\documentclass[11pt,reqno]{amsart}

\usepackage[colorlinks,bookmarks=false,linkcolor=blue,linktocpage]{hyperref}

\marginparwidth 0pt \oddsidemargin -.5cm \evensidemargin -.5cm
\marginparsep 0pt \topmargin -1.5cm \textwidth 17cm \textheight
25cm \sloppy

\newtheorem{theorem}{Theorem}
\newtheorem{lemma}[theorem]{Lemma}
\newtheorem{corollary}[theorem]{Corollary}
\newtheorem{proposition}[theorem]{Proposition}

\newtheorem*{rem*}{Remark}

\newcommand{\lex}{{\operatorname{lex}}}
\newcommand{\lm}{{\operatorname{lm}}}
\newcommand{\lc}{{\operatorname{lc}}}
\newcommand{\Mon}{\operatorname{Mon}}
\newcommand{\Spec}{\operatorname{Spec}}
\newcommand{\mm}{{\mathfrak m}}
\newcommand{\pp}{{\mathfrak p}}
\newcommand{\upto}{,\ldots,}
\newcommand{\gen}[1]{\langle{#1}\rangle}
\newcommand{\Quot}{\operatorname{Quot}}

\renewcommand{\phi}{\varphi}
\newcommand{\dimv}{\dim_{\operatorname{v}}}
\newcommand{\RR}{{\mathbb R}}
\newcommand{\QQ}{{\mathbb Q}}
\newcommand{\ZZ}{{\mathbb Z}}
\newcommand{\NN}{{\mathbb N}}
\newcommand{\A}{{\bf A}}
\newcommand{\V}{{\bf V}}
\newcommand{\R}{{\bf R}}
\newcommand{\W}{{\bf W}}
\newcommand{\B}{{\bf B}}

\begin{document}

\title{Valuative dimension and monomial orders}

\author{Gregor Kemper}
\address{Technische Universi\"at M\"unchen, Zentrum Mathematik - M11,
Boltzmannstr. 3, 85748 Garching, Germany}
\email{kemper@ma.tum.de}

\author{Ihsen Yengui}
\address{Department of Mathematics,  Faculty of Sciences of
  Sfax, University of Sfax, 3000 Sfax, Tunisia}
\email{ihsen.yengui@fss.rnu.tn}

\thanks{The second author thanks the Alexander von Humboldt Foundation
  for funding his stay at the Technische Universit\"at M\"unchen,
  during which the research for this paper was done.}

\keywords{Krull dimension, valuative dimension, monomial order,
  non-Noetherian ring}
\subjclass{13P10, 13F30, 16P60}

\begin{abstract}
  The main result from this note provides a constructive
  characterization of the valuative dimension, which bears a strong
  analogy to Lombardi's~\cite{Lombardi:2002} constructive
  characterization of the Krull dimension. While Lombardi's
  characterization uses the lexicographic monomial order, ours uses
  the graded (reverse) lexicographic order or, in fact, any graded
  rational monomial order. Apart from this, the paper contains some
  related results and some examples which readers may find
  illuminating.
\end{abstract}

\maketitle

\section*{Introduction}

In 2002 Lombardi~\cite{Lombardi:2002} characterized the Krull
dimension of a commutative ring $\A$ by means of certain relations
between the elements of $\A$. More precisely, he showed that for a
positive integer~$n$, we have $\dim(\A) < n$ if and only if all
elements $a_1 \upto a_n \in \A$ satisfy a relation
$P(a_1 \upto a_n) = 0$, where $P \in \A[X_1 \upto X_n]$ is a
polynomial whose lexicographically smallest monomial has
coefficient~$1$. Twelve years later, the first author and
Trung~\cite{Kemper:Trung:2014} showed that if $\A$ is Noetherian, then
this result extends to any monomial order, not just the lexicographic
one.

The research for this note started as an attempt to show that the
hypothesis on Noetherianness can be dropped from the results
in~\cite{Kemper:Trung:2014}. However, instead of a proof, we found
some counterexamples, which are presented in this paper. These
examples seemed to suggest that the valuative dimension had some role
to play, and indeed we ended up showing that any graded rational
monomial order (which we define, in a rather obvious way, in
Section~\ref{sPreliminaries}) measures the valuative dimension in
precisely the same way as the lexicographic order measures the Krull
dimension by Lombardi's result. This is the content of
Theorem~\ref{tVdim} of this paper, providing a constructive
characterization of the valuative dimension.

A different constructive characterization of the valuative dimension
was given by Coquand~\cite{Coquand:2009}. This characterization is in
terms of distributive lattices and seems to be much less elementary
than the one given in this note. Moreover, Coquand's characterization
is restricted to the case of integral domains, whereas ours does not
require this hypothesis.

Apart from the characterization of the valuative dimension, this note
contains a few other results, which can be summarized as follows,
using the language explained in Section~\ref{sPreliminaries}.

\begin{itemize}
\item For a rational monomial order~$<$, the maximal number of
  $<$-independent elements of a ring lies between its Krull dimension
  and its valuative dimension (Theorems~\ref{tDim}
  and~\ref{tVdim}\eqref{tVdimA}).
\item But if~$<$ is only a rational preorder, the maximal number of
  $<$-independent elements of a ring may be smaller than its Krull
  dimension (Proposition~\ref{pW}).
\item If~$<$ is an irrational monomial order, the maximal number of
  $<$-independent elements of a ring may be smaller than the Krull
  dimension or larger than the valuative dimension
  (Propositions~\ref{pW} and~\ref{pV}).
\item For a local ring, the maximal number of
  analytically independent elements does not exceed the valuative
  dimension (Corollary~\ref{cAnalytic}).
\end{itemize}

\noindent {\bf Acknowledgment.} We thank Henri Lombardi for fruitful
and interesting conversations.

\section{Preliminaries} \label{sPreliminaries}

In this note a {\em ring} is always understood to be a commutative
ring with unity.

We will be working with monomial orders on a polynomial ring
$\A[X_1 \upto X_n]$ over a ring $\A$. Sometimes we will also consider
monomial preorders in the sense of \cite{Kemper:Trung:2014}, which
basically are monomial orders where ties between different monomials
are allowed. According to \cite[Theorem~1.2]{Kemper:Trun:VanAnh:2017},
every monomial preorder~$<$ is given by a matrix
$M \in \RR^{m \times n}$ for some $m > 0$ in the following way:
\[
  X_1^{e_1} X_2^{e_2} \cdots X_n^{e_n} < X_1^{f_1} X_2^{f_2} \cdots
  X_n^{f_n} \qquad \Longleftrightarrow \qquad M \cdot
  \begin{pmatrix} e_1 \\ \vdots \\ e_n \end{pmatrix} <_\lex M \cdot
  \begin{pmatrix} f_1 \\ \vdots \\ f_n \end{pmatrix}
\]
($e_i,f_i \in \NN$), where $\lex$ denotes the lexicographic order with
$X_1 > X_2 > \cdots > X_n$, applied to exponent vectors in $\ZZ^n$. A
given matrix $M$ defines a monomial preorder if and only if all
columns are nonzero and their first nonzero entry is positive. Notice
that different matrices may define the same monomial preorder. For
example, adding a multiple of a row of $M$ to a lower row does not
change the preorder. By doing this repeatedly, we can achieve that $M$
has nonnegative entries, so from now on we will assume
$M \in \RR_{\ge 0}^{m \times n}$. Let us call a preorder {\em
  rational} if it can be defined by a matrix with rational entries,
which can then be assumed to be nonnegative integers. All monomial
orders used in practice are rational. A rational preorder is a
monomial order (i.e., there are no ties between monomials) if and only
if $M$ has rank~$n$, so in this case we may assume $m = n$. By
contrast, irrational preorders can be orders even if $m < n$, for
example if $M$ consists of a single row of real numbers that are
linearly independent over $\QQ$. A monomial preorder is said to be
{\em graded} if the first row of $M$ has only positive entries. It is
sometimes convenient to extend a monomial preorder to the monomials in
the Laurent polynomial ring $\A[X_1^{\pm 1} \upto X_n^{\pm
  1}]$. If~$<$ is a monomial ordering and
$P \in \A[X_1^{\pm 1} \upto X_n^{\pm 1}]$ is a nonzero (Laurent)
polynomial, we write $\lm_<(P)$ for its smallest monomial, and
$\lc_<(P)$ for the coefficient of this monomial.

Let us recall the notion of independence according
to~\cite{Kemper:Trung:2014}. Given a monomial order~$<$, a sequence
$a_1 \upto a_n \in \A$ is called {\em dependent} with respect to~$<$
(or, for short, $<$-{\em dependent}) if there exists
$P \in \A[X_1 \upto X_n]$ with $P(a_1 \upto a_n) = 0$ and
$\lc_<(P) = 1$. If~$<$ is only a preorder, then a polynomial may have
several minimal monomials. In this case it is required that among the
minimal monomials of $P$, at least one has coefficient~$1$. Let us
mention that Lombardi~\cite{Lombardi:2002} calls a sequence
pseudo-singular if it is lex-dependent, and otherwise
pseudo-regular. His result can now be stated as follows.

\begin{theorem}[Lombardi~\cite{Lombardi:2002}] \label{tLombardi}%
  Let $\A$ be a ring and~$n$ a positive integer. Then $\dim(\A) < n$
  if and only if every sequence of~$n$ elements of $\A$ is
  $\lex$-dependent.
\end{theorem}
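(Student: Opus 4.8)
This is Lombardi's theorem; the proof I would give is essentially his and proceeds in three steps. \emph{Step 1: the inductive characterization of Krull dimension.} For $a \in \A$ set $I_\A(a) := a\A + \{x \in \A : a^k x = 0 \text{ for some } k \in \NN\}$, the Krull boundary ideal of $a$. Then $\dim(\A) < 0$ precisely when $\A$ is the zero ring, and for $n \ge 1$,
\[
  \dim(\A) < n \quad\Longleftrightarrow\quad \dim\bigl(\A/I_\A(a)\bigr) < n-1 \ \text{ for every } a \in \A .
\]
I would cite this Coquand--Lombardi recursion, or else prove it by a brief direct argument with prime chains, the key point being that $\mathfrak p \supseteq (0 :_\A a^\infty)$ forces $a$ to be a non-nilpotent element of $\A_{\mathfrak p}$, so that a chain whose bottom prime contains $I_\A(a)$ can always be extended downward by a prime avoiding $a$, and conversely that truncating a chain at a prime missing $a$ produces one of the quotients $\A/I_\A(a)$.

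\emph{Step 2: unwinding.} Iterating the recursion along elements $a_1 \upto a_n \in \A$ shows that $\dim(\A) < n$ holds if and only if, for every such tuple, the ring built from $\A$ by successively passing to the boundary-ideal quotient of the images of $a_1 \upto a_n$ is trivial. Spelling out that triviality condition back in $\A$ by a direct computation yields the explicit collapse relation: there are $b_1 \upto b_n \in \A$ and $m_1 \upto m_n \in \NN$ with
\[
  a_1^{m_1} a_2^{m_2} \cdots a_n^{m_n} + \sum_{j=1}^{n} b_j\, a_1^{m_1} \cdots a_{j-1}^{m_{j-1}} a_j^{m_j + 1} = 0 ,
\]
equivalently $a_1^{m_1}\bigl(a_1 b_1 + a_2^{m_2}(a_2 b_2 + \cdots + a_n^{m_n}(a_n b_n + 1)\cdots)\bigr) = 0$.

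\emph{Step 3: the monomial-order reading.} The displayed identity is $P(a_1 \upto a_n) = 0$ for $P = X_1^{m_1}\cdots X_n^{m_n} + \sum_{j=1}^n b_j\, X_1^{m_1}\cdots X_{j-1}^{m_{j-1}} X_j^{m_j+1}$. Every monomial occurring in the sum shares the exponents $m_1 \upto m_{j-1}$ with $X_1^{m_1}\cdots X_n^{m_n}$ but has a strictly larger exponent in $X_j$, hence is $\lex$-larger; so $\lm_\lex(P) = X_1^{m_1}\cdots X_n^{m_n}$ and $\lc_\lex(P) = 1$, i.e.\ $(a_1 \upto a_n)$ is $\lex$-dependent. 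Conversely, if $(a_1 \upto a_n)$ is $\lex$-dependent via a polynomial $P$ with $\lm_\lex(P) = X_1^{m_1}\cdots X_n^{m_n}$ and $\lc_\lex(P) = 1$, then each remaining monomial of $P$, being $\lex$-larger, agrees with $X_1^{m_1}\cdots X_n^{m_n}$ on an initial block of exponents and exceeds it in the next one; grouping monomials according to the first index of excess puts $P$ into exactly the form above, and evaluating at $a_1 \upto a_n$ gives back a collapse relation. Concatenating the three steps proves the theorem. The one genuine obstacle is Step~1 — matching the boundary-ideal recursion to the prime-chain definition of Krull dimension — whereas the translation in Step~3 is purely combinatorial.
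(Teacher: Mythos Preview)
The paper does not prove this theorem at all: it is stated with attribution to Lombardi~\cite{Lombardi:2002} and then used as a black box throughout. Your proposal is therefore not competing with anything in the paper; rather, you are reconstructing Lombardi's own argument, and you do so correctly. The three steps you outline --- the Coquand--Lombardi boundary-ideal recursion $\dim(\A) < n \Leftrightarrow \dim(\A/I_\A(a)) < n-1$ for all~$a$, its unwinding into the nested collapse relation, and the combinatorial identification of that relation with $\lex$-dependence --- are exactly the ingredients of the original proof, and the details you give (in particular the grouping in Step~3 by the first index of excess, followed by evaluation to turn the polynomial factors $B_j$ into ring elements $b_j$) are accurate. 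Your own assessment that Step~1 is the only substantive point is also right; if you want the argument to be self-contained you should spell out the prime-chain verification of the recursion rather than just sketch it, but as a citation to Coquand--Lombardi it is fine.
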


\section{Non-Noetherian rings} \label{sNonNoetherian}

Our first example shows that Theorem~\ref{tLombardi} does not extend
to general monomial orders. We start with the monoid ring of
$\RR_{\ge 0}$ over the rational function field $\QQ(u)$. We write this
ring as $\QQ(u)\{v\}$ (not to be confused with the Puiseux polynomial
ring), and we write its elements as sums
$\sum_{\alpha \in \RR_{\ge 0}} c_\alpha v^\alpha$ with
$c_\alpha \in \QQ(u)$, where only finitely many $c_\alpha$ are
nonzero. Consider the prime ideal $\pp \subset \QQ(u)\{v\}$ of all
elements with $c_0 = 0$, and set $S := \QQ(u)\{v\} \setminus \pp$. Now
we form $\R := \QQ + S^{-1} \pp \subset \Quot\bigl(\QQ(u)\{v\}\bigr)$,
which will provide our example. The following result emphasizes the
distinguished standing that the lexicographic monomial order enjoys.

\begin{proposition} \label{pR}%
  Let~$<$ be a monomial preorder on two variables.
  \begin{enumerate}
    \renewcommand{\labelenumi}{(\theenumi)}
    \renewcommand{\theenumi}{\alph{enumi}}
  \item \label{pRa} If~$<$ is a lexicographic order, then every
    sequence of two elements of $\R$ is dependent with respect to~$<$.
  \item \label{pRb} If~$<$ is not lexicographic, there exist two
    elements of $\R$ that are independent with respect to~$<$.
  \item \label{pRc} $\R$ is a local ring of Krull dimension~$1$.
  \end{enumerate}
\end{proposition}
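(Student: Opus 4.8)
The plan is to understand the ring $\R = \QQ + S^{-1}\pp$ concretely first, then attack the three parts. The key observation is that elements of $S^{-1}\pp$ are fractions $a/s$ with $a \in \pp$ (so $a = \sum_{\alpha > 0} c_\alpha v^\alpha$, no constant term) and $s \notin \pp$ (so $s$ has nonzero constant term, hence is a unit in $\Quot(\QQ(u)\{v\})$ up to the subtleties of the monoid ring). A general element of $\R$ is $q + a/s$ with $q \in \QQ$. I expect that $\R$ is local with maximal ideal $S^{-1}\pp$: the units are exactly those $q + a/s$ with $q \neq 0$, since then one can invert. Krull dimension~$1$ for part~\eqref{pRc} should follow from the chain $0 \subsetneq S^{-1}\pp$ together with an argument that $S^{-1}\pp$ has height~$1$, or more concretely by exhibiting that $\R$ sits between $\QQ$ and a valuation-like overring and computing directly. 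The role of the exponents in $\RR_{\ge 0}$ (rather than $\NN$) is what makes $\R$ non-Noetherian and is presumably also what breaks the naive monomial-order statement.

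For part~\eqref{pRa}, I would take two arbitrary elements $f_1, f_2 \in \R$ and produce a polynomial $P \in \R[X_1, X_2]$ vanishing on them whose $\lex$-smallest monomial has coefficient~$1$. The natural strategy mirrors Lombardi: since $\dim(\R) = 1$, Theorem~\ref{tLombardi} itself already gives $\lex$-dependence of any two elements, so part~\eqref{pRa} is essentially immediate \emph{once} part~\eqref{pRc} is established. So the logical order is: prove~\eqref{pRc} first, then~\eqref{pRa} is a corollary of Lombardi's theorem, and the real content of the proposition is~\eqref{pRb}.

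For part~\eqref{pRb}, the goal is: given a monomial preorder~$<$ on $\{X_1, X_2\}$ that is not the lexicographic order (with either variable first — though note that if $X_1 > X_2$ is not lex, it may still be lex with $X_2 > X_1$; I should check exactly which orders count as "lexicographic" here), I must exhibit $f_1, f_2 \in \R$ that are $<$-independent, i.e., no $P$ with $P(f_1,f_2) = 0$ has a minimal monomial of coefficient~$1$. The natural candidates are $f_1 = v^{\alpha}$-type elements, or rather elements like $f_1 = $ something in $S^{-1}\pp$ and $f_2$ another such, chosen so that any algebraic relation forces the wrong leading behavior. I expect the key is that a non-lex preorder on two variables either is graded-ish or weights the variables so that "small" monomials in the $<$ sense are mixed monomials $X_1^a X_2^b$ with both $a, b > 0$, or pure powers that don't match the lex pattern; then one shows that a relation $P(f_1, f_2) = 0$ in $\R$ with $\lc_< = 1$ would force a constant term or unit to appear where $\R$ forbids it (since $\R$'s non-$\QQ$ part lives in the non-unit ideal $S^{-1}\pp$). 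Concretely I would pick $f_1, f_2 \in S^{-1}\pp$ that are "generic" — perhaps $f_1 = v$ and $f_2 = uv$ or $f_2 = v/(1+v)$ — and analyze the possible vanishing relations using the valuation on $v$-exponents and the $u$-degree.

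The main obstacle I anticipate is part~\eqref{pRb}: I must handle \emph{all} non-lexicographic monomial preorders on two variables uniformly, which via the matrix description means a $1\times 2$ or $2\times 2$ nonnegative matrix, and the "not lexicographic" condition rules out only the matrices row-equivalent to $\bigl(\begin{smallmatrix} 1 & 0 \\ 0 & 1\end{smallmatrix}\bigr)$ or its swap. So I'd split into cases: (i) $<$ is graded (first row strictly positive), where the smallest monomial among those of a given total degree is governed by later rows, and (ii) $<$ non-graded but non-lex, meaning the first row is, say, $(w_1, w_2)$ with exactly one entry zero but the tie-break is "wrong". In each case I need to choose $f_1, f_2$ and then rule out \emph{every} polynomial relation, which requires understanding precisely which polynomials in two variables vanish at a given pair of elements of $\R$ — this is where knowing $\R$ very explicitly (its fraction field, the structure of $S^{-1}\pp$ as a $\QQ$-algebra, and the absence of nontrivial algebraic relations among "independent-looking" elements) becomes essential. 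I would expect to reduce to showing that for a well-chosen pair, the only polynomials vanishing are in the ideal generated by one obvious relation, and then check that this relation's $<$-minimal monomial has a non-unit coefficient (lives in $S^{-1}\pp$), giving $<$-independence.
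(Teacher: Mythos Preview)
Your plan has the logical dependence between~\eqref{pRa} and~\eqref{pRc} reversed. You propose to deduce~\eqref{pRa} from~\eqref{pRc} via Theorem~\ref{tLombardi}, but your sketch of~\eqref{pRc} gives no concrete way to obtain $\dim(\R) \le 1$: $\R$ is \emph{not} a valuation domain (neither of $v$, $uv$ divides the other, since $u, u^{-1} \notin \R$), so your ``valuation-like overring'' remark does not help, and ``$S^{-1}\pp$ has height~$1$'' is exactly what needs proving. The paper proceeds in the opposite direction: it proves~\eqref{pRa} by an explicit relation (for nonzero $a = \tfrac{q}{r} v^\alpha$, $b = \tfrac{s}{t} v^\beta$ in $S^{-1}\pp$, take $n > \alpha/\beta$ and observe $c := b^n/a \in \R$, yielding $Y^n - c X = 0$), and then~\eqref{pRc} follows from~\eqref{pRa} by Theorem~\ref{tLombardi}.

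For~\eqref{pRb} there is a genuine missing idea. Your candidate pair $v, uv$ does \emph{not} work for all non-lex preorders: for the preorder given by the single row $M = (2,1)$, the polynomial $P = Y^2 - u^2 v \cdot X$ vanishes at $(v, uv)$ (note $u^2 v \in S^{-1}\pp \subset \R$), and both monomials $Y^2$, $X$ are minimal with the coefficient of $Y^2$ equal to~$1$; so $v,uv$ are $<$-dependent. The pair must be chosen depending on~$<$. The paper first observes that for a non-lex preorder the first row $(\alpha,\beta)$ of~$M$ is strictly positive (this also shows your case~(ii) is empty), and then takes $a = v^\alpha$, $b = u v^\beta$. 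The point of this choice is the identity $m(a,b) = m(1,u)\, v^{\deg m}$ for every monomial~$m$, where $\deg(X^i Y^j) := \alpha i + \beta j$. Given any $P = \sum_m c_m\, m$ with $P(a,b) = 0$, one divides by $v^\delta$ (with~$\delta$ the minimal degree) and applies the constant-term map $\phi\colon \R \to \QQ$ to obtain $\sum_{\deg m = \delta} \phi(c_m)\, m(1,u) = 0$; algebraic independence of~$u$ over~$\QQ$ then forces $\phi(c_m) = 0$ for every minimal~$m$, so no minimal coefficient can equal~$1$. Your proposed route---describe the full ideal of relations and check one generator---is both harder and, as the example above shows, not robust across preorders.
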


\begin{proof}
  \begin{enumerate}
  \item[(\ref{pRa})] Let $a = c_0 + \frac{f}{s} v^\alpha$ be an
    arbitrary element of $\R$, where $c_0 \in \QQ$,
    $f \in \QQ(u)\{v\}$, $s \in S$, and $\alpha \in \RR_{> 0}$. If
    $c_0 \ne 0$, then~$a$ is invertible since
    $a^{-1} = c_0^{-1} - \frac{c_0^{-1} f}{c_0 s + f v^\alpha}
    v^\alpha \in \R$, so~$a$ satisfies the equation
    $1 - a^{-1} X = 0$. Since~$0$ satisfies $X = 0$, we only need to
    consider two elements $a = \frac{q}{r} v^\alpha$ and
    $b = \frac{s}{t} v^\beta$ with $q,r,s,t \in S$ and
    $\alpha,\beta \in \RR_{>0}$. With
    $n := \lfloor \frac{\alpha}{\beta}\rfloor + 1$ we have
    \[
      c := \frac{b^n}{a} = \frac{r s^n}{q t^n} v^{n \beta - \alpha}
      \in \R,
    \]
    so $Y^n - c X$ provides an equation for $a,b$ whose lowest
    coefficient with respect to the lexicographic order with $X > Y$
    is~$1$. For the lexicographic order with $Y > X$, the roles of~$a$
    and~$b$ need to be interchanged.
  \item[(\ref{pRb})] \newcommand{\M}{\operatorname{Mon}}%
    Let
    $M = \left(\begin{smallmatrix} \alpha & \beta \\ \gamma &
        \delta \end{smallmatrix}\right)$ be a real matrix
    defining~$<$. The entry~$\alpha$ must be positive, since otherwise
    we could assume $(\gamma,\delta) = (1,0)$ and~$<$ would be
    lexicographic. The same argument shows $\beta > 0$. Now we set
    $a := v^\alpha$, $b = u v^\beta$ and claim that these elements of
    $\R$ are independent with respect to~$<$.

    So let $P \in \R[X,Y]$ with $P(a,b) = 0$. We need to show that no
    coefficient of~$P$ that belongs to a minimal monomial of~$P$ can
    be~$1$. For a monomial $m = X^i Y^j$ we set
    $\deg(m) = \alpha i + \beta j$. With~$\delta \in \RR_{\ge 0}$ the
    minimum degree attained by the monomials of~$P$, it follows that a
    monomial of~$P$ that is minimal needs to have
    degree~$\delta$. Let~$Q$ be the sum of all terms of~$P$ with
    degree~$\delta$. Then it suffices to show that no coefficient
    of~$Q$ is equal to~$1$. For a monomial~$m$ as above we have
    $m(a,b) = u^j v^{\deg(m)} = m(1,u) v^{\deg(m)}$. Writing
    $P = \sum_{m \in \M(P)} c_m \cdot m$ with $c_m \in \R$, we obtain
    \[
      0 = P(a,b) = v^\delta \sum_{m \in \M(P)} c_m m(1,u) v^{\deg(m) -
        \delta}.
    \]
    Since $\R$ is a domain, we may divide this equation
    by~$v^\delta$. Applying the homomorphism $\phi$: $\R \to \QQ$ that
    sends an element of $\R$ to its constant coefficient now yields
    \[
      \sum_{m \in \M(Q)} \phi(c_m) m(1,u) = 0.
    \]
    Since the monomials of $Q$ have the same degree, the expressions
    $m(1,u)$ in the above sum are pairwise distinct powers
    of~$u$. Since~$u$ is algebraically independent over $\QQ$, it
    follows that $\phi(c_m) = 0$ for every $m \in \M(Q)$. So indeed no
    coefficient of $Q$ can be equal to~$1$.
  \item[(\ref{pRc})] By Theorem~\ref{tLombardi}, part~(\ref{pRa})
    implies $\dim(\R) \le 1$. The reverse inequality can perhaps most
    quickly be seen from the chain $\{0\} \subset S^{-1}\pp$ of
    primes; it is also easy to see that~$v \in \R$ is
    $\lex$-independent. The calculation in the proof of~\eqref{pRa}
    shows that $\R \setminus S^{-1}\pp \subseteq \R^\times$, so $\R$
    is local. \qed
  \end{enumerate}
  \renewcommand{\qed}{}
\end{proof}

We now prove an easy lemma, which will be used several times. For
matrix $L = (\alpha_{i,j}) \in \ZZ^{n \times n}$, we define the
homomorphism
\[
  \phi_L\mbox{:}\ \A[X_1^{\pm 1} \upto X_n^{\pm 1}] \to \A[X_1^{\pm 1}
  \upto X_n^{\pm 1}], X_i \mapsto \prod_{j=1}^n X_j^{\alpha_{j,i}}.
\]

\begin{lemma} \label{lPrelim}%
  Let $\A$ be a ring and~$<$ be a rational monomial order on
  $\A[X_1 \upto X_n]$, given by a matrix
  $M = (\alpha_{i,j}) \in \ZZ_{\ge 0}^{n \times n}$. Then for
  $P \in \A[X_1^{\pm 1} \upto X_n^{\pm 1}]$ we have
  \[
    \lc_<(P) = \lc_\lex\bigl(\phi_M(P)\bigr).
  \]
  Moreover, let $a_1 \upto a_n \in \A$ and set
  $b_i := \prod_{j=1}^n a_j^{\alpha_{j,i}}$. If the $b_i$ form a
  $<$-dependent sequence in $\A$, then $a_1 \upto a_n \in \A$ are
  $\lex$-dependent.
\end{lemma}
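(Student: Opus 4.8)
The plan is to prove the two assertions of Lemma~\ref{lPrelim} in sequence, deriving the second from the first. The key observation is that $\phi_M$ is an algebra endomorphism of the Laurent polynomial ring that acts on monomials by $X^e \mapsto X^{Me}$ (reading exponent vectors as columns), and that it is a bijection on the monoid of Laurent monomials precisely because $M$ has rank $n$. Since $<$ is defined by $X^e < X^f \iff Me <_\lex Mf$, the map $\phi_M$ translates the order $<$ into $\lex$ by construction. I expect the only subtlety to be bookkeeping: checking that $\phi_M$ does not collapse distinct monomials (so that the smallest monomial of $P$ maps to the smallest monomial of $\phi_M(P)$ with the same coefficient, rather than having its coefficient altered by cancellation), and this is exactly where rationality, or more precisely $\operatorname{rank}(M) = n$, is used.

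\textbf{First part.} Write $P = \sum_{m \in \Mon(P)} c_m \cdot m$ with all $c_m \ne 0$, where the monomials $m = X_1^{e_1} \cdots X_n^{e_n}$ range over a finite set of elements of $\ZZ^n$. Then $\phi_M(P) = \sum_m c_m \cdot \phi_M(m)$, where $\phi_M(m) = \prod_i \bigl(\prod_j X_j^{\alpha_{j,i}}\bigr)^{e_i} = \prod_j X_j^{(Me)_j}$, i.e.\ $\phi_M$ sends the monomial with exponent vector $e$ to the monomial with exponent vector $Me$. Because $M$ is an integer matrix of rank $n$, the linear map $e \mapsto Me$ on $\ZZ^n$ is injective, so distinct monomials of $P$ are sent to distinct monomials of $\phi_M(P)$; hence no cancellation occurs and $\Mon(\phi_M(P)) = \{\phi_M(m) : m \in \Mon(P)\}$ with the coefficient of $\phi_M(m)$ in $\phi_M(P)$ equal to $c_m$. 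Now $\lm_<(P)$ is the monomial $m$ minimizing $Me$ in $\lex$, which is exactly the preimage under $\phi_M$ of $\lm_\lex(\phi_M(P))$; therefore the coefficients agree, $\lc_<(P) = c_{\lm_<(P)} = \lc_\lex(\phi_M(P))$.

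\textbf{Second part.} Suppose $b_1 \upto b_n$ is $<$-dependent, so there is $P \in \A[X_1 \upto X_n]$ with $P(b_1 \upto b_n) = 0$ and $\lc_<(P) = 1$. Set $\widetilde P := \phi_M(P) \in \A[X_1^{\pm 1} \upto X_n^{\pm 1}]$; by the first part $\lc_\lex(\widetilde P) = \lc_<(P) = 1$. Since $b_i = \prod_j a_j^{\alpha_{j,i}}$ is precisely $\phi_M(X_i)$ evaluated at $(a_1 \upto a_n)$, functoriality of substitution gives $\widetilde P(a_1 \upto a_n) = P(b_1 \upto b_n) = 0$. To conclude $\lex$-dependence we must exhibit an honest polynomial, not merely a Laurent one: multiply $\widetilde P$ by a suitable monomial $X_1^{d_1} \cdots X_n^{d_n}$ (with each $d_k$ the negative of the smallest exponent of $X_k$ occurring in $\widetilde P$, so $d_k \ge 0$ since $M$ has nonnegative entries and $P$ is a genuine polynomial — in fact one can take $d_k$ so that $X^d \cdot \widetilde P$ has $X_k$-adic order $0$) to obtain $Q \in \A[X_1 \upto X_n]$. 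Multiplication by a fixed monomial is an order-isomorphism on monomials, so it does not change which monomial is $\lex$-smallest nor its coefficient; hence $\lc_\lex(Q) = 1$, while $Q(a_1 \upto a_n) = a_1^{d_1} \cdots a_n^{d_n} \cdot \widetilde P(a_1 \upto a_n) = 0$. Thus $a_1 \upto a_n$ are $\lex$-dependent, completing the proof.
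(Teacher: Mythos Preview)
Your argument is correct and matches the paper's: $\phi_M$ acts on exponent vectors as $e \mapsto Me$, is injective on monomials because $\operatorname{rank} M = n$, and converts $<$ into $\lex$ by definition, so $\lc_<(P) = \lc_\lex(\phi_M(P))$; applying $\phi_M$ to a witness for $<$-dependence of the $b_i$ then yields a witness for $\lex$-dependence of the $a_i$. One small point: your final step of multiplying $\widetilde P = \phi_M(P)$ by a monomial to clear negative exponents is unnecessary (and the parenthetical is backwards---nonnegativity of $M$ makes the smallest $X_k$-exponent in $\widetilde P$ nonnegative, hence your $d_k \le 0$, not $\ge 0$), since $M \in \ZZ_{\ge 0}^{n \times n}$ already forces $\phi_M(P) \in \A[X_1,\ldots,X_n]$; the paper simply takes $Q := \phi_M(P)$ and is done.
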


\begin{proof}
  For a monomial $m = \prod_{i=1}^n X_i^{e_i}$ with $e_i \in \ZZ$ we
  have
  \[
    m < 1 \quad \Longleftrightarrow \quad \prod_{j=1}^n \prod_{i=1}^n
    X_j^{\alpha_{j,i} e_i} <_\lex 1 \quad \Longleftrightarrow \quad
    \phi_M(m) <_\lex 1.
  \]
  Since~$\phi_M$ is injective on monomials, this implies the first
  assertion. In the situation of the second assertion there exists
  $P \in \A[X_1 \upto X_n]$ with $P(b_1 \upto b_n) = 0$ and
  $\lc_<(P) = 1$. So $Q := \phi_M(P)$ satisfies $Q(a_1 \upto a_n) = 0$
  and $\lc_\lex(Q) = \lc_<(P) = 1$.
\end{proof}

As a first consequence we obtain:

\begin{theorem} \label{tDim}%
  Let $\A$ be a ring and~$<$ a rational monomial order on~$n$
  variables. If every sequence of~$n$ elements of $\A$ is
  $<$-dependent, then $\dim(A) < n$.
\end{theorem}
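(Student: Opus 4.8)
My plan is to reduce the statement to Lombardi's Theorem~\ref{tLombardi} by way of Lemma~\ref{lPrelim}. First I would invoke Theorem~\ref{tLombardi} to rephrase the target inequality $\dim(\A) < n$ as the assertion that \emph{every} sequence of~$n$ elements of $\A$ is $\lex$-dependent. So I would fix an arbitrary sequence $a_1 \upto a_n \in \A$ and try to manufacture a polynomial $Q \in \A[X_1 \upto X_n]$ with $Q(a_1 \upto a_n) = 0$ and $\lc_\lex(Q) = 1$.

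Next I would pick a matrix $M = (\alpha_{i,j}) \in \ZZ_{\ge 0}^{n \times n}$ defining~$<$; this is available because~$<$ is a rational monomial order, hence of rank~$n$, and after the row operations described in Section~\ref{sPreliminaries} its entries may be taken to be nonnegative integers. Then I would form the elements $b_i := \prod_{j=1}^n a_j^{\alpha_{j,i}} \in \A$, noting that these really do lie in~$\A$ precisely because the exponents $\alpha_{j,i}$ are nonnegative. The crucial point is that $b_1 \upto b_n$ is itself a sequence of~$n$ elements of~$\A$, so by hypothesis it is $<$-dependent. At this stage Lemma~\ref{lPrelim} applies verbatim to the $b_i$ and the matrix~$M$, and its second assertion yields that $a_1 \upto a_n$ are $\lex$-dependent. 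Since the sequence $a_1 \upto a_n$ was arbitrary, Theorem~\ref{tLombardi} then delivers $\dim(\A) < n$.

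I do not expect a genuine obstacle here: Lemma~\ref{lPrelim} is essentially tailor-made for exactly this reduction. The only step that warrants a moment's care is the realization that one is allowed to apply the hypothesis to the \emph{derived} tuple $(b_i)$ rather than to the original tuple $(a_i)$ — but this is entirely legitimate, since the hypothesis quantifies over all $n$-element sequences of $\A$, and there is nothing special being assumed about the $a_i$ beyond membership in $\A$.
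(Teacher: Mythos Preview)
Your proposal is correct and follows essentially the same approach as the paper's own proof: choose a nonnegative integer matrix $M$ defining~$<$, form the derived tuple $b_i = \prod_j a_j^{\alpha_{j,i}}$, apply the hypothesis to the~$b_i$, and invoke Lemma~\ref{lPrelim} together with Theorem~\ref{tLombardi}. The paper's argument is simply a terser version of what you wrote.
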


\begin{proof}
  The order~$<$ is given by a matrix
  $M = (\alpha_{i,j}) \in \ZZ_{\ge 0}^{n \times n}$. Let
  $a_1 \upto a_n \in \A$ and form the~$b_i$ as in
  Lemma~\ref{lPrelim}. Then the hypothesis and the lemma yield that
  $a_1 \upto a_n$ are $\lex$-dependent. From this the assertion
  follows by Theorem~\ref{tLombardi}.
\end{proof}

If $\A$ is Noetherian, then by~\cite[Theorem~3.5]{Kemper:Trung:2014},
Theorem~\ref{tDim} extends to all monomial preorders and the converse
also holds.

We now give an example of a (non-Noetherian) ring such that
Theorem~\ref{tDim} fails for all monomial preorders that are not
rational orders. It is also an example where the maximal number of
independent elements is smaller than the Krull dimension. In contrast,
the ring $\R$ constructed above has ``too many'' independent
elements. With~$u$ and~$v$ indeterminates, consider the subring
\[
  \W:= \QQ[u]_{\gen{u}} + v \, \QQ(u)[v]_{\gen{v}} \subseteq \QQ(u,v),
\]
of the rational function field, where the subscripts stand for
localization at the prime ideal generated by~$u$ and~$v$,
respectively. It is easy to see that $\W$ consists of the rational
functions with denominator not divisible by~$v$, such that the
evaluation at $v = 0$ has a denominator not divisible by~$u$.

Recall that a {\em valuation domain} is an integral domain such that
for any two elements, one divides the other. The first assertion of
the following proposition will be quite clear for readers who are
familiar with valuation domains.

\begin{proposition} \label{pW}%
  \begin{enumerate}
    \renewcommand{\labelenumi}{(\theenumi)}
    \renewcommand{\theenumi}{\alph{enumi}}
  \item \label{pWa} $\W$ is a $2$-dimensional valuation domain.
  \item \label{pWb} Let~$<$ be a monomial preorder on two variables
    that is not a rational monomial order. Then every sequence of two
    elements of $\W$ is $<$-dependent.
  \end{enumerate}
\end{proposition}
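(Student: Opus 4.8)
The plan is to prove part~\eqref{pWa} first and then derive part~\eqref{pWb} by a careful case analysis. For~\eqref{pWa}, the natural strategy is to exhibit a valuation on $\QQ(u,v)$ whose valuation ring is exactly $\W$. The description of $\W$ — rational functions whose $v$-denominator is a unit at $v=0$ and whose value at $v=0$ has $u$-denominator a unit at $u=0$ — strongly suggests a composite (rank-two) valuation: first take the $v$-adic valuation on $\QQ(u)(v)$, whose residue field is $\QQ(u)$, and then compose with the $u$-adic valuation on $\QQ(u)$. Concretely, this is the valuation $w$: $\QQ(u,v)^\times \to \ZZ \times \ZZ$ (ordered lexicographically) sending a nonzero rational function $g$ to $\bigl(\operatorname{ord}_v(g),\,\operatorname{ord}_u(g_0)\bigr)$, where $g_0 \in \QQ(u)$ is the leading $v$-coefficient of $g$. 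One checks that $w$ is a valuation and that its valuation ring $\{g : w(g) \ge (0,0)\} \cup \{0\}$ coincides with~$\W$; since the value group $\ZZ \times \ZZ$ with the lexicographic order has rank~$2$, the Krull dimension of~$\W$ is~$2$. (Alternatively, one can verify the divisibility condition directly: given $g_1, g_2 \in \W$, compare their $v$-orders, and in case of a tie compare the $u$-orders of the $v$-leading coefficients, to see that one divides the other.)

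For part~\eqref{pWb}, let $M = \left(\begin{smallmatrix}\alpha&\beta\\\gamma&\delta\end{smallmatrix}\right)\in\RR_{\ge 0}^{2\times 2}$ define~$<$. Since~$<$ is not a rational monomial order, $M$ is either of rank~$1$, or of rank~$2$ but not row-equivalent to a rational (hence integral) matrix — in the latter case the two rows are genuinely "irrationally tilted." In all cases the first row $(\alpha,\beta)$ has nonnegative entries. The idea is that, given two elements $a, b \in \W$, the valuation~$w$ from part~\eqref{pWa} assigns them values in $\ZZ\times\ZZ$, and we want to produce a binomial $c\cdot m_1 - m_2$ (with $m_i$ monomials in $X, Y$ and $c \in \W$ a unit) vanishing on $(a,b)$, whose two monomials $m_1, m_2$ are both $<$-minimal — i.e., tie under~$<$ — so that the coefficient~$1$ of $m_2$ witnesses $<$-dependence. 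A relation $m_1(a,b)\cdot c = m_2(a,b)$ forces $c = m_2(a,b)/m_1(a,b) \in \W^\times$, which by the valuation description means $m_1$ and $m_2$ must have equal $w$-value on $(a,b)$; and the $<$-tie condition $m_1 \equiv m_2$ means $M(e_1-f_1, e_2-f_2)^{\mathsf T} = 0$. So one needs a nonzero integer vector in the kernel of $M$ that simultaneously lies in the kernel of the $\ZZ\times\ZZ$-valued "evaluation" map $X^iY^j \mapsto i\cdot w(a) + j\cdot w(b)$. I would handle this by reducing to the case $a = v^{p}u^{q}$, $b = v^{r}u^{s}$ (units times monomials; the non-monomial and zero/unit cases are disposed of as in the proof of Proposition~\ref{pR}), and then arguing that the relevant kernels intersect nontrivially.

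The main obstacle will be part~\eqref{pWb}, specifically showing that the two linear conditions — "$<$-tie" and "equal valuation" — can always be met simultaneously by a single nonzero integer exponent vector. When $M$ has rank~$1$, its kernel is a rational line, and one must check that the evaluation map, restricted to that line, can be made to vanish (which may require choosing the relation cleverly, perhaps using that rank-one graded orders give a lot of ties); when $M$ has rank~$2$ but is irrational, $M$ has trivial kernel, so the "$<$-tie" condition cannot be satisfied by any monomial pair at all — here one must instead exploit that an irrational rank-two order, while having no ties, still has $<$-minimal monomials sitting in a position where a suitable non-binomial polynomial relation coming from the valuation structure of~$\W$ has its unique minimal monomial with coefficient~$1$. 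Pinning down this second sub-case, and making sure the constructed polynomial genuinely has the claimed leading coefficient, is where the real work lies; I expect to lean on Lemma~\ref{lPrelim}-style manipulations and on the explicit two-step valuation to control $\operatorname{ord}_v$ and $\operatorname{ord}_u$ of all the coefficients that appear.
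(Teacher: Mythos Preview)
Your treatment of part~\eqref{pWa} via the composite valuation $w\colon \QQ(u,v)^\times \to \ZZ^2_{\lex}$ is correct and amounts to the same thing the paper does (the paper phrases it as: the $v^i u^j$ form a system of representatives of $\QQ(u,v)^\times/\W^\times$, and $v^i u^j \in \W$ iff $(i,j) \ge_\lex (0,0)$).

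For part~\eqref{pWb} there is a genuine gap. You split into ``$M$ has rank~$1$'' versus ``$M$ has rank~$2$ but is irrational'', and in the second case you observe that $\ker M = \{0\}$, so no two distinct monomials can tie; you then reach for an unspecified ``non-binomial polynomial relation''. The observation you are missing is that a two-variable irrational order is \emph{already determined by its first row}: if $\alpha/\beta$ is irrational then $\alpha e + \beta f \ne 0$ for every nonzero integer vector $(e,f)$, so the second row of $M$ is never consulted and may simply be deleted. Hence both of your cases collapse to a single row $M = (\alpha,\beta)$ with $\alpha,\beta > 0$, and a binomial relation suffices throughout.

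Once this reduction is made, you should also relax your search criteria. You ask for a tie $M\cdot(e,f)^{\mathsf T}=0$ together with $c \in \W^\times$, i.e.\ $A\cdot(e,f)^{\mathsf T}=0$ where $A = \left(\begin{smallmatrix} i_1 & i_2 \\ j_1 & j_2 \end{smallmatrix}\right)$ records the exponents in $a = v^{i_1}u^{j_1}$, $b = v^{i_2}u^{j_2}$. These two equalities are generically incompatible (and in the irrational case the first is outright impossible over~$\QQ$). What actually suffices is the pair of \emph{inequalities}
\[
  \alpha e + \beta f \le 0 \qquad\text{and}\qquad A\cdot
  \begin{pmatrix} e \\ f \end{pmatrix} \ge_\lex
  \begin{pmatrix} 0 \\ 0 \end{pmatrix},
\]
which always admit a nonzero rational solution $(e,f)$: if the ratios $\alpha:\beta$ and $i_1:i_2$ differ this is immediate, and if they agree one uses the second row of $A$. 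Writing $e = e_1 - e_2$, $f = f_1 - f_2$ with $e_i,f_i \in \NN$, the binomial $P = X^{e_1}Y^{f_1} - c\,X^{e_2}Y^{f_2}$ with $c = a^e b^f \in \W$ then has $X^{e_1}Y^{f_1}$ as a $<$-minimal monomial with coefficient~$1$. No non-binomial relation is needed, and Lemma~\ref{lPrelim} plays no role here.
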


\begin{proof}
  \begin{enumerate}
  \item[\eqref{pWa}] The sequence
    \[
      \{0\} \subset v \, \QQ(u)[v]_{\gen{v}} \subset u \,
      \QQ[u]_{\gen{u}} + v \, \QQ(u)[v]_{\gen{v}}
    \]
    of prime ideals shows that $\dim(\W) \ge 2$, and the reverse
    inequality follows since $\W$ has transcendence degree~$2$ over
    $\QQ$ (see \cite[Theorem~5.5]{Kemper.Comalg}).

    A rational function in $\W$ is invertible in $\W$ if and only if
    evaluating it at $v = 0$ and then evaluating the result at $u = 0$
    yields a nonzero value. Now let $a \in \QQ(u,v) = \Quot(\W)$ be
    any nonzero rational function. There is an integer~$i$ such that
    $b := v^i a$ has numerator and denominator not divisible by~$v$,
    and there is an integer~$j$ such that $u^j b(0)$ (the evaluation
    is at $v = 0$) has numerator and denominator not divisible
    by~$u$. Therefore $v^i u^j a \in \W^\times$. This shows that the
    $v^i u^j$ form a system of representatives of
    $\QQ(u,v)^\times/\W^\times$. Moreover, we have $v^i u^j \in \W$ if
    and only if $\left(\begin{smallmatrix} i \\
        j\end{smallmatrix}\right) \ge_\lex\left(\begin{smallmatrix} 0 \\
        0\end{smallmatrix}\right)$. So for the above~$a$ we have
    $a \in \W$ or $a^{-1} \in \W$, which shows that $\W$ is a
    valuation domain.
  \item[\eqref{pWb}] The preorder~$<$ is given by a real matrix $M$
    with two columns and at most two rows. Assuming that $M$ has two
    rows, we may add a multiple of the first row to the second and
    then multiply the second row by a positive real number. This way,
    the second row may be assumed to have entries in
    $\{0,1,-1\}$. If~$<$ is irrational, it follows that the first row
    of $M$ consists of two real numbers with irrational ratio, and in
    this case the second row can be deleted since the first row
    completely determines~$<$. If, on the other hand, $<$ is not a
    monomial order, then $M$ has rank~$1$, and again the second row
    can be deleted. So in both cases we can assume
    $M = (\alpha,\beta)$ with~$\alpha,\beta \in \RR$ positive.

    For showing that all sequences of two elements $a,b \in \W$ are
    $<$-dependent, we may assume~$a$ and~$b$ to be nonzero and replace
    them by associated elements. So by the above we may assume
    $a = v^{i_1} u^{j_1}$ and $b = v^{i_2} u^{j_2}$. With
    $A := \left(\begin{smallmatrix} i_1 & i_2 \\ j_1 &
        j_2 \end{smallmatrix}\right)$, we claim that there exists a
    nonzero vector $\left(\begin{smallmatrix} e \\
        f \end{smallmatrix}\right) \in \QQ^2$ such that
    \begin{equation} \label{eqMA}%
      \alpha e + \beta f = M \cdot
      \begin{pmatrix}
        e \\ f
      \end{pmatrix} \le 0 \quad \text{but} \quad A \cdot
      \begin{pmatrix}
        e \\ f
      \end{pmatrix} \ge_\lex \begin{pmatrix} 0 \\ 0
      \end{pmatrix}.
    \end{equation}
    This is clear if the ratios of~$\alpha$ and~$\beta$ and of~$i_1$
    and~$i_2$ are different. But if the ratios are equal, then
    any~$e$, $f$ with $i_1 e + i_2 f = 0$ will satisfy the first
    inequality in~(\ref{eqMA}), so in addition we need
    $j_1 e + j_2 f \ge 0$. If $\operatorname{rank}(A) = 2$, then
    $j_1 e + j_2 f > 0$ can be achieved, and if
    $\operatorname{rank}(A) = 1$, then $j_1 e + j_2 f = 0$ is true
    whenever $i_1 e + i_2 f = 0$. Having proved the claim, we may now
    assume~$e, f \in \ZZ$ and write $e = e_1 - e_2$ and
    $f = f_1 - f_2$ with $e_i, f_i \in \NN$. Then
    $X^{e_1} Y^{f_1} \le X^{e_2} Y^{f_2}$ by the first inequality
    in~(\ref{eqMA}). Moreover,
    \[
      c := \frac{a^{e_1} b^{f_1}}{a^{e_2} b^{f_2}} = a^e b^f = v^{i_1
        e + i_2 f} u^{j_1 e + j_2 f} \in \W
    \]
    by the second inequality in~(\ref{eqMA}) and by the above
    reasoning. So the polynomial
    $P := X^{e_1} Y^{f_1} - c X^{e_2} Y^{f_2} \in \W[X,Y]$ vanishes
    at~$a,b$, and the monomial $X^{e_1} Y^{f_1}$ is minimal among the
    monomials of~$P$. This shows that~$a$ and~$b$ are
    $<$-dependent. \endproof
  \end{enumerate}
\end{proof}

\section{The valuative dimension} \label{sVdim}

Recall that the {\em valuative dimension} of a domain $\A$, denoted by
$\dimv(\A)$, is the supremum of the Krull dimensions of all overrings
of $\A$, where an overring of $\A$ is defined to be a subring of
$\Quot(\A)$ containing $\A$. It is worth mentioning that, as pointed
out by Gilmer~\cite[Theorem~30.9]{Gilmer:72}, $\dimv(\A) \le n$ iff
for any elements $t_1 \upto t_n \in \Quot(\A)$,
$\dim(\A[t_1 \upto t_n]) \le n$.
In the case of an integral domain, this can be interpreted as a
constructive characterization of the valuative dimension, and it is in
fact the definition adopted by Lombardi and Quitt\'e in the integral
case in their book~\cite{Lombardi:Quitte:2015}.  If $\A$ is a ring
which need not be a domain, $\dimv(\A)$ is defined as the supremum of
all $\dimv(A/\pp)$ with $\pp \in \Spec(\A)$ a prime ideal (see
Jaffard~\cite[p.~56]{Jaffard:60}). It is clear that
$\dimv(\A) \ge \dim(\A)$.

In this section we prove the following theorem, which is the main
result of the paper.

\begin{theorem} \label{tVdim}%
  Let $\A$ be a ring, $n$ a positive integer, and~$<$ a rational
  monomial preorder on~$n$ variables.
  \begin{enumerate}
    \renewcommand{\labelenumi}{(\theenumi)}
    \renewcommand{\theenumi}{\alph{enumi}}
  \item \label{tVdimA} If $\dimv(\A) < n$, then every sequence of~$n$
    elements in $\A$ is dependent with respect to~$<$.
  \item \label{tVdimB} If~$<$ is a graded monomial order, then the
    converse of~\eqref{tVdimA} holds.
  \end{enumerate}
\end{theorem}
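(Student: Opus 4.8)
The strategy is to reduce to Lombardi's theorem (Theorem~\ref{tLombardi}) by a change of variables, in the spirit of Lemma~\ref{lPrelim}, but now we must exploit the extra freedom offered by the valuative dimension: we are allowed to pass to overrings. Write~$<$ via an integer matrix $M = (\alpha_{i,j}) \in \ZZ_{\ge 0}^{m \times n}$ (a preorder, so $m$ need not equal $n$). Given $a_1 \upto a_n \in \A$, I would first reduce to the case where $\A$ is a domain and the $a_i$ are nonzero: modding out by a minimal prime only shrinks $\dimv$, and the independence notion is such that a dependence relation over a quotient lifts. In the domain case, consider the overring $\A' := \A[a_i/a_j : i,j]$ obtained by adjoining all ratios; by Gilmer's criterion quoted in the text (\cite[Theorem~30.9]{Gilmer:72}), $\dim(\A') \le \dimv(\A) < n$. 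Now the point is to cook up, from the row structure of $M$, a monomial $\lex$-dependence among cleverly chosen elements of $\A'$ whose $\phi_M$-image (or an analogue) produces the desired $<$-dependence among the $a_i$. Concretely, the rows of $M$ give $n$ "grading" functionals; using $\dim(\A') \le n-1$ and Lombardi applied to suitable products/ratios $\prod a_j^{c_j}$ built from these functionals, one extracts a polynomial whose $<$-minimal monomial has coefficient~$1$. The main obstacle here is bookkeeping: arranging that the Laurent monomials involved actually lie in $\A'$ (nonnegativity/sign conditions on the exponent vectors, exactly as in the $\eqref{eqMA}$-type argument in the proof of Proposition~\ref{pW}\eqref{pWb}) while simultaneously ensuring the transformed polynomial has the right leading behavior for $<$.

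**The plan for part (b).**
Here $<$ is a graded monomial order, and we assume every sequence of $n$ elements of $\A$ is $<$-dependent; we want $\dimv(\A) < n$. By the definition of valuative dimension for general rings, it suffices to treat the case where $\A$ is a domain and, by Gilmer's criterion again, to show that for every overring $\B = \A[t_1 \upto t_n]$ with $t_i \in \Quot(\A)$ we have $\dim(\B) \le n$. Equivalently, by Lombardi, every sequence of $n+1$ elements of such a $\B$ is $\lex$-dependent — but a cleaner route is to show directly that $\dimv(\A) \le n-1$ fails to be violated, i.e. to bound $\dim(\B)$. Write $t_i = p_i/q_i$ and set $b_j := $ the generators $a_k$ together with the numerators and denominators; the elements of $\B$ are, up to a common denominator, polynomials in the $a_k, p_i, q_i$. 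The idea is: given a candidate chain of $n+1$ primes in $\B$, or equivalently $n+1$ elements of $\B$, clear denominators to land in $\A$, apply the hypothesis (a $<$-dependence on $n$ of the resulting elements of $\A$), and then push the relation back down to $\B$. The gradedness of $<$ is what makes this work: since the first row of $M$ is strictly positive, the $<$-minimal monomial of a polynomial is among its lowest-total-degree (suitably weighted) monomials, so a $<$-dependence relation $P(a_1\upto a_n)=0$ with $\lc_<(P)=1$ localizes well and survives multiplication by units/denominators after a homogeneous rescaling $X_i \mapsto \lambda_i X_i$. This compatibility of gradedness with the $\mathbb{G}_m^n$-rescaling used to clear denominators is the technical heart.

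**Where the real difficulty lies.**
I expect part~\eqref{tVdimB} to be the hard direction, and within it the crux is controlling how a $<$-dependence behaves under the two operations we must perform repeatedly: (i) adjoining a fraction $t = p/q$, and (ii) localizing. For (i) the trick should be that if $a_1 \upto a_n, p, q$ are elements with $p = t q$ in the overring, then any $<$-dependence among an $n$-subset including both $p$ and $q$ can be traded, via the relation $pX - qY$ and a graded leading-term computation, for information about $t$; iterating gives a dimension bound on $\A[t_1\upto t_n]$. Gradedness ensures no "leading monomial at infinity" pathology of the kind exhibited by the ring $\R$ of Proposition~\ref{pR} (which is precisely why a non-graded order like plain lex would be wrong here). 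I would set up an induction on $n$, using the $n=1$ case (where $<$-dependent for a graded order on one variable just says every element satisfies a relation with invertible trailing coefficient, forcing $\dim \le 0$ locally hence $\dimv \le 0$) as the base, and feeding the inductive hypothesis through the localization $\A_\pp$ at a height-$n$ prime. The bookkeeping to make "clear denominators, apply hypothesis, re-homogenize, divide back out" into a clean argument is, I anticipate, the bulk of the work.
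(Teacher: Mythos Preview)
Your strategic framework is right---pass to an overring, invoke Lombardi, and transform back---but the central technical device that makes both directions go through is missing from your plan, and in part~(a) you actually have it pointing the wrong way.

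For~(a), the paper first refines the preorder to a rational monomial \emph{order}, so that the defining matrix $M \in \ZZ_{\ge 0}^{n \times n}$ is square and invertible; without this you cannot form $M^{-1}$, and your plan (which keeps $m \le n$) stalls. Next, the key construction is \emph{not} to adjoin the ratios $a_i/a_j$ and then look for ``products built from the rows of $M$''. Rather, one sets $L := k M^{-1} \in \ZZ^{n\times n}$, defines $b_i := \prod_j a_j^{(L)_{j,i}}$, and works in the overring $\B = \A[b_1,\ldots,b_n]$. Lombardi gives a $\lex$-dependence of the $b_i$; applying the monomial substitution $\phi_L$ to the witnessing polynomial and using $\phi_M \circ \phi_L = (X_i \mapsto X_i^k)$ turns this into a $<$-dependence of the $a_i$ (this is exactly Lemma~\ref{lPrelim} run in reverse). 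Using the rows of $M$ itself, as you suggest, goes the wrong direction: that is what Lemma~\ref{lPrelim} already does, converting $<$-dependence of the $b_i$ into $\lex$-dependence of the $a_i$. The inverse matrix is the missing idea.

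For~(b), your ``clear denominators, apply hypothesis, re-homogenize'' is morally on target, but the paper's execution is much sharper and avoids your proposed induction on $n$ entirely. Given $b_1,\ldots,b_n$ in an overring $\B$ and a common denominator $a$, one replaces $b_1$ by $a^k b_1$ for a single well-chosen $k$ (this is where gradedness enters: the first row of $M$ is strictly positive, so one can pick $k$ with $k\alpha_{1,i} \ge \sum_j \alpha_{j,i}$), after which the elements $a_i := \prod_j b_j^{\alpha_{j,i}}$ all lie in $\A$. The hypothesis gives a $<$-dependence of the $a_i$, and Lemma~\ref{lPrelim} converts this directly into a $\lex$-dependence of the $b_i$. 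There is no need to handle $n+1$ elements, no induction, and no localization at height-$n$ primes; the whole thing is a two-step change of variables. Your sketch does not isolate the specific products $\prod_j b_j^{\alpha_{j,i}}$ or the replacement $b_1 \mapsto a^k b_1$, and without them the ``bookkeeping'' you anticipate has no organizing principle.
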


\begin{proof}
  \begin{enumerate}
  \item[(\ref{tVdimA})] Let $a_1 \upto a_n \in \A$. We start with two
    reduction steps. First, we can refine~$<$ to a rational monomial
    order by appending some rows of the unit matrix at the bottom of
    the matrix defining~$<$. If we can show that $a_1 \upto a_n$ are
    dependent with respect to the order thus obtained, then they are
    also dependent with respect to the original preorder. Therefore we
    may assume that~$<$ is a rational monomial order. Second, we
    reduce to the case that $\A$ is a domain. For this, consider the
    multiplicative set
    \[
      S := \bigl\{P(a_1 \upto a_n) | P \in \A[X_1 \upto X_n] \
      \text{with}\ \lc_<(P) = 1\bigr\} \subseteq \A
    \]
    and assume that we can show part~\eqref{tVdimA} in the domain
    case. Then for every ${\mathfrak p} \in \Spec(\A)$, the sequence
    $a_1 \upto a_n$ is $<$-dependent in $\A/\mathfrak p$, so
    $S \cap {\mathfrak p} \ne \emptyset$. This means that the
    localization $S^{-1} \A$ has no prime ideals and is therefore
    zero. So $0 \in S$, which shows the dependence of $a_1 \upto
    a_n$. Hence indeed we may assume $\A$ to be a domain. We need to
    show that $a_1 \upto a_n$ are $<$-dependent. Since this is clear
    if an~$a_i$ is zero, we may assume the~$a_i$ to be nonzero.

    By the first reduction, $<$ is given by a matrix
    $M \in \ZZ_{\ge 0}^{n \times n}$ of rank~$n$. There is a positive
    integer~$k$ such that $L := k \cdot M^{-1} \in \ZZ^{n \times
      n}$. We write $L = (\beta_{i,j})_{i,j = 1 \upto n}$ and set
    \[
      b_i = \prod_{j=1}^n a_j^{\beta_{j,i}} \in \Quot(\A) \quad (i = 1
      \upto n).
    \]
    $\B := \A[b_1 \upto b_n]$ is an overring of $\A$, so
    $\dim(\B) < n$ by hypothesis. It follows by
    Theorem~\ref{tLombardi} that the~$b_i$ are $\lex$-dependent, so
    there is a polynomial $P \in \B[X_1 \upto X_n]$ with
    $P(b_1 \upto b_n) = 0$ and $\lc_\lex(P) = 1$. Each coefficient~$c$
    of $P$ can be written as $c = C[b_1 \upto b_n]$ with
    $C \in \A[X_1 \upto X_n]$. Substituting each coefficient~$c$ by
    $C$ yields a polynomial in $\A[X_1 \upto X_n]$ that also vanishes
    at $b_1 \upto b_n$ and has lowest coefficient~$1$. So we may
    assume $P \in \A[X_1 \upto X_n]$. With the notation introduced
    before Lemma~\ref{lPrelim}, set
    $Q := \phi_L(P) \in \A[X_1^{\pm 1} \upto X_n^{\pm 1}]$. We have
    $\phi_M\bigl(\phi_L(X_i)\bigr) = X_i^k$ for all~$i$, so
    Lemma~\ref{lPrelim} shows
    \[
      \lc_<(Q) = \lc_\lex\bigl(\phi_M(Q)\bigr) = \lc_\lex\bigl(P(X_1^k
      \upto X_n^k)\bigr) = \lc_\lex(P) = 1.
    \]
    We clearly have $Q(a_1 \upto a_n) = P(b_1 \upto b_n) = 0$. By
    multiplying $Q$ with a suitable monomial, we obtain a polynomial
    in $\A[X_1 \upto X_n]$ that also vanishes at $a_1 \upto a_n$ and
    has lowest coefficient~$1$ with respect to~$<$. So $a_1 \upto a_n$
    are $<$-dependent, which finishes the proof of~\eqref{tVdimA}.
  \item[(\ref{tVdimB})] We need to show that
    $\dimv(\A/{\mathfrak p}) < n$ for every
    ${\mathfrak p} \in \Spec(\A)$. By hypothesis, every sequence
    of~$n$ elements in $\A$ is $<$-dependent, so it is also
    $<$-dependent as a sequence in $\A/\mathfrak p$. Replacing $\A$ by
    $\A/\mathfrak p$, we may therefore assume that $\A$ is a
    domain. Given an overring $\B$ of $\A$ and a sequence
    $b_1 \upto b_n \in \B$, we need to show that it is
    $\lex$-dependent; indeed, by Theorem~\ref{tLombardi}, this will
    imply $\dim(\B) < n$.

    Since $\B \subseteq \Quot(\A)$, we can choose a nonzero $a\in \A$
    such that $a b_i \in \A$ for all~$i$.  Our monomial order~$<$ is
    given by a matrix
    $M = (\alpha_{i,j}) \in \ZZ_{\ge 0}^{n \times n}$. Since it is
    graded, we can choose a positive integer~$k$ such that
    $k \alpha_{1,i} \ge \sum_{j=1}^n \alpha_{j,i}$ for all~$i$. Then
    \begin{equation} \label{eqA}%
      (a^k b_1)^{\alpha_{1,i}} \cdot \prod_{j=2}^n b_j^{\alpha_{j,i}}
      = a^{k \alpha_{1,i} - \sum_{j=1}^n \alpha_{j,i}} \cdot
      \prod_{j=1}^n (a b_j)^{\alpha_{j,i}} \in \A.
    \end{equation}
    We claim that it is enough to show that
    $a^k b_1,b_2,b_3 \upto b_n$ are $\lex$-dependent. In fact, if they
    are, then there is a polynomial $P \in \B[X_1 \upto X_n]$
    vanishing at these elements with $\lc_\lex (P) = 1$. So
    $P(a^k X_1,X_2,X_3 \upto X_n)$ vanishes at $b_1 \upto b_n$. If~$e$
    is the exponent of $X_1$ in the smallest monomial of $P$, then all
    coefficients of $P(a^k X_1,X_2 \upto X_n)$ are divisible by
    $a^{k e}$, so
    $Q := a^{-k e} P(a^k X_1,X_2 \upto X_n) \in \B[X_1 \upto
    X_n]$. Now $Q(b_1 \upto b_n) = 0$ and $\lc_\lex(Q) = 1$, which
    yields the desired $\lex$-dependence of the~$b_i$. So the claim is
    proved.

    The claim means that we may replace~$b_1$ by $a^k b_1$. Then
    by~\eqref{eqA}, $a_i := \prod_{j=1}^n b_j^{\alpha_{j,i}} \in \A$
    ($i = 1 \upto n$).  By hypothesis, $a_1 \upto a_n$ are
    $<$-dependent, so they are also $<$-dependent when considered as a
    sequence in $\B$. Now Lemma~\ref{lPrelim} shows that the sequence
    $b_1 \upto b_n$ is $\lex$-dependent, as desired. \qed
  \end{enumerate}
  \renewcommand{\qed}{}
\end{proof}

The following example shows that Theorem~\ref{tVdim}\eqref{tVdimA}
does not extend to irrational monomial preorders. Consider $\QQ\{v\}$,
the monoid ring of $\RR_{\ge 0}$ over $\QQ$, and let
$\V = \QQ\{v\}_\pp$ be the localization at the ideal~$\pp$ of all
elements with constant coefficient equal to~$0$.

\begin{proposition} \label{pV}
  \begin{enumerate}
    \renewcommand{\labelenumi}{(\theenumi)}
    \renewcommand{\theenumi}{\alph{enumi}}
  \item \label{pVa} $\V$ is a valuation domain with
    $\dim(\V) = \dimv(\V) = 1$.
  \item \label{pVb} Let~$<$ be an irrational monomial preorder on two
    variables. Then there exist two elements of $\V$ that are
    independent with respect to~$<$.
  \end{enumerate}
\end{proposition}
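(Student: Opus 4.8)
\emph{Part~\eqref{pVa}.} The plan is to identify $\V$ as the valuation ring of an explicit real-valued valuation. The starting point is that any nonzero $f \in \QQ\{v\}$ can be written $f = v^{\gamma_0} w$ with $\gamma_0 \in \RR_{\ge 0}$ the least exponent occurring in $f$ and $w \in \QQ\{v\}$ of nonzero constant coefficient, so that $w \notin \pp$ becomes a unit in $\V$. Hence every nonzero element of $\V = \QQ\{v\}_\pp$ has the form $v^\gamma u$ with $\gamma \in \RR_{\ge 0}$ and $u \in \V^\times$; since $v^\delta \in \V^\times$ only for $\delta = 0$, the rule $\operatorname{val}(v^\gamma u) := \gamma$ defines a valuation on $\Quot(\V)$ whose valuation ring is precisely $\V$, so $\V$ is a valuation domain. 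Its primes correspond to the convex subgroups of the value group $(\RR, \le)$, of which there are exactly two, so $\{0\}$ and $\pp\V$ are the only prime ideals and $\dim(\V) = 1$. For $\dimv(\V)$ I would apply Gilmer's criterion quoted above with $n = 1$: for $t \in \Quot(\V)$, either $t \in \V$, so $\V[t] = \V$, or (as $\V$ is a valuation domain) $t^{-1} \in \V$, in which case $\V[t]$ is the localization of $\V$ at the element $t^{-1}$; either way $\dim(\V[t]) \le \dim(\V) = 1$, giving $\dimv(\V) \le 1$, while $\dimv(\V) \ge \dim(\V)$ is automatic.

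\emph{Part~\eqref{pVb}.} First I would reduce the shape of $<$ using the normalization of two-variable preorders carried out in the proof of Proposition~\ref{pW}\eqref{pWb}: an irrational monomial preorder on $X, Y$ may be assumed to be given by one row $(\alpha, \beta)$ with $\alpha, \beta > 0$, and irrationality forces $\alpha/\beta \notin \QQ$. Then $<$ is a total order, and the weight $\deg(X^i Y^j) = \alpha i + \beta j$ is injective on $\NN^2$: a collision $\alpha i + \beta j = \alpha i' + \beta j'$ gives $\alpha(i - i') = \beta(j' - j)$, which by irrationality of $\alpha/\beta$ forces $i = i'$ and then $j = j'$. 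My candidate independent pair is $a := v^\alpha$, $b := v^\beta$, both of which lie in $\QQ\{v\} \subseteq \V$.

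To show $a, b$ are $<$-independent, I would suppose to the contrary that there is $P = \sum_{(i,j)} c_{ij} X^i Y^j \in \V[X,Y]$ with $P(a,b) = 0$ and $\lc_<(P) = 1$, and let $X^{i_0} Y^{j_0}$ be the (unique) $<$-smallest monomial of $P$, so $c_{i_0 j_0} = 1$. Evaluating in $\V$ gives $P(a,b) = \sum_{(i,j)} c_{ij}\, v^{\alpha i + \beta j}$, and since $\operatorname{val}(c_{ij}) \ge 0$ for every nonzero $c_{ij} \in \V$, the $(i,j)$-term has valuation $\operatorname{val}(c_{ij}) + \alpha i + \beta j \ge \alpha i + \beta j$, with equality at $(i_0, j_0)$ because $c_{i_0 j_0} = 1$. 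By minimality of $(i_0, j_0)$ and injectivity of the weight, every other monomial of $P$ has strictly larger weight, so the $(i_0, j_0)$-term uniquely minimizes the valuation among all terms; hence $P(a,b)$ has valuation $\alpha i_0 + \beta j_0 < \infty$ and is in particular nonzero, a contradiction.

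The one step I expect to require real care is the reduction at the start of part~\eqref{pVb} --- namely that every irrational monomial preorder on two variables is, up to the usual row manipulations, a weight order with irrational slope --- and for this I would simply appeal to the matrix computation already done in Proposition~\ref{pW}\eqref{pWb}. Beyond that the argument is short, but I would stress in the write-up \emph{why} it works: it uses that $\V$ has value group all of $\RR$, so the real weights $\alpha, \beta$ literally live in it, and that the valuation is nonnegative on $\V$, so the coefficient-$1$ term of minimal weight cannot be cancelled. This is exactly what fails for a rational order, where $\alpha, \beta$ can be chosen in $\NN$ and weight collisions $\alpha i + \beta j = \alpha i' + \beta j'$ with $(i,j) \ne (i',j')$ do occur --- which is precisely the mechanism behind Theorem~\ref{tVdim}\eqref{tVdimA}.
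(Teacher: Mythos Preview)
Your proof is correct. Part~\eqref{pVb} is essentially identical to the paper's argument, only phrased through the valuation rather than the residue map $\phi\colon \V \to \QQ$ sending an element to its constant coefficient; the two are equivalent since $\phi(x) \ne 0 \iff \operatorname{val}(x) = 0$, and both arguments isolate the same unique term of minimal weight.

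For part~\eqref{pVa} the paper takes a slightly different route: rather than the convex-subgroup correspondence, it bounds $\dim(\V) \le 1$ via Lombardi's Theorem~\ref{tLombardi} by exhibiting an explicit $\lex$-dependence between any two elements (writing $a = v^\alpha$, $b = v^\beta$ up to units and using $b^n - c a = 0$ with $n = \lceil \alpha/\beta \rceil$); and for $\dimv(\V) = 1$ it simply cites the general fact from Jaffard that Krull and valuative dimension coincide on valuation domains, instead of verifying Gilmer's criterion by hand. Your alternatives are standard and perfectly acceptable; the paper's choice has the mild advantage of keeping the argument self-contained with the tools already developed in the paper.
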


\begin{proof}
  \begin{enumerate}
  \item[\eqref{pVa}] It is clear that $\V$ is a valuation domain. The
    chain $\{0\} \subset \pp_\pp$ or the $\lex$-independence of~$v$
    show that $\dim(\V) \ge 1$. The reverse inequality follows by
    Theorem~\ref{tLombardi} if we can show that any two
    elements~$a,b \in \V$ are $\lex$-dependent. We may assume~$a$
    and~$b$ to be nonzero and noninvertible, and replace them by
    associate elements. This yields $a = v^\alpha$ and $b = v^\beta$
    with $\alpha,\beta \in \RR_{>0}$. With
    $n := \lceil \frac{\alpha}{\beta}\rceil$ and
    $c := v^{n \beta - \alpha} \in \V$, the relation $b^n - c a = 0$
    shows that~$a$ and~$b$ are $\lex$-dependent. So $\dim(\V) = 1$,
    and $\dimv(\V) = 1$ follows from the fact that for a valuation
    domain, the valuative and Krull dimensions coincide (see
    \cite[Chap.~IV, Prop.~1]{Jaffard:60}).
  \item[\eqref{pVb}] We may assume that~$<$ is given by a matrix
    $M = (\alpha,\beta)$ with $\alpha,\beta \in \RR_{>0}$ linearly
    independent over $\QQ$. So with the degree of a monomial defined
    as in the proof of Proposition~\ref{pR}\eqref{pRb}, the smallest
    monomial of a polynomial is the (unique) monomial with minimal
    degree. We claim that $a := v^\alpha$ and $b := v^\beta$ are
    $<$-independent. So let
    $P = \sum_{m \in \Mon(P)} c_m \cdot m \in \V[X,Y]$ be a polynomial
    vanishing at~$a,b$. Then with~$\delta$ the degree of the smallest
    monomial of $P$ we have
    \[
      v^\delta \sum_{m \in \Mon(P)} c_m \cdot v^{\deg(m) - \delta} = 0.
    \]
    Dividing this by $v^\delta$ and applying the homomorphism $\phi$:
    $\V \to \QQ$ that sends an element of $\V$ to its constant
    coefficient yields the equation $\phi(\lc(P)) = 0$, since
    $\phi(v^{\deg(m) - \delta}) = 0$ for $m \ne \lm(P)$. So $\lc(P)
    \ne 1$, which proves the claim. \qed
  \end{enumerate}
  \renewcommand{\qed}{}
\end{proof}

The above proof also shows that if~$<$ is a monomial order on~$n$
variables given by $M = (\alpha_1 \upto \alpha_n)$ with the~$\alpha_i$
linearly independent over $\QQ$, then there are~$n$ elements of $\V$
that are $<$-independent. \\

We now present two applications of Theorem~\ref{tVdim}. The first is a
new proof of the well-known but nontrivial fact that for a Noetherian
ring, the Krull dimension and the valuative dimension coincide.

\begin{corollary}[{\cite[Chapt.~IV, Corollaire~2 to
    Th\'eor\`eme~5]{Jaffard:60}}] \label{cDim}%
  Let $\A$ be a Noetherian ring. Then $\dimv(\A) = \dim(\A)$.
\end{corollary}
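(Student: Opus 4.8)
The plan is to combine Theorem~\ref{tVdim} with the Noetherian extension of Theorem~\ref{tDim} recorded right after Theorem~\ref{tDim}. Since $\dimv(\A) \ge \dim(\A)$ holds for any ring, it suffices to establish the reverse inequality $\dimv(\A) \le \dim(\A)$. If $\dim(\A) = \infty$, then $\dimv(\A) \ge \dim(\A)$ already forces $\dimv(\A) = \infty$, and there is nothing to prove; so I would assume $d := \dim(\A)$ is finite and set $n := d+1$.

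Next I would choose~$<$ to be the graded lexicographic monomial order on~$n$ variables. This is a rational \emph{graded} monomial order, hence eligible for both of the results I want to invoke. Because $\A$ is Noetherian and $\dim(\A) = d < n$, the converse part of the extension of Theorem~\ref{tDim} to Noetherian rings (valid for arbitrary monomial preorders, \cite[Theorem~3.5]{Kemper:Trung:2014}) guarantees that every sequence of~$n$ elements of $\A$ is $<$-dependent.

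Finally, I would feed this into Theorem~\ref{tVdim}\eqref{tVdimB}: as~$<$ is a graded monomial order and every $n$-term sequence of $\A$ is dependent with respect to~$<$, the converse of Theorem~\ref{tVdim}\eqref{tVdimA} yields $\dimv(\A) < n = d+1$, i.e.\ $\dimv(\A) \le d = \dim(\A)$. Together with the trivial inequality this gives $\dimv(\A) = \dim(\A)$. There is no genuine obstacle in this argument; the only points to watch are disposing of the infinite-dimensional case separately and picking a monomial order that is at once rational (so the Kemper--Trung result applies) and graded (so Theorem~\ref{tVdim}\eqref{tVdimB} applies) --- the graded lexicographic order meets both requirements.
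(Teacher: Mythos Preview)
Your proposal is correct and follows essentially the same approach as the paper: choose a graded rational monomial order, use the Noetherian result from \cite{Kemper:Trung:2014} to translate $\dim(\A)<n$ into $<$-dependence of all $n$-sequences, and then apply Theorem~\ref{tVdim} to translate this into $\dimv(\A)<n$. The only cosmetic difference is that the paper runs the equivalence $\dim(\A)<n \Leftrightarrow \dimv(\A)<n$ for every positive integer~$n$ simultaneously (so the infinite-dimensional case and both inequalities fall out at once), whereas you fix $n=\dim(\A)+1$ and appeal separately to the trivial inequality $\dimv(\A)\ge\dim(\A)$.
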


\begin{proof}
  Let~$n$ be a positive integer and choose a graded rational monomial
  order~$<$ on~$n$ variables. Then by
  \cite[Theorem~2.7]{Kemper:Trung:2014}, the Krull dimension of $\A$
  is less than~$n$ if and only if every sequence of~$n$ elements is
  $<$-dependent. But by Theorem~\ref{tVdim}, this is equivalent to
  $\dimv(\A) < n$.
\end{proof}

Our second application deals with analytic independence, which is
defined, according to Matsumura~\cite{Matsumura:86}, as follows. Some
elements $a_1 \upto a_n \in \mm$ from the maximal ideal of a local
ring $\A$ are analytically independent if every homogeneous polynomial
in $\A[X_1 \upto X_n]$ vanishing at $a_1 \upto a_n$ has all its
coefficients lying in~$\mm$. To the best of our knowledge, the
following corollary is new.

\begin{corollary} \label{cAnalytic}%
  Let $\A$ be a local ring with $\dimv(\A) < n$ and let
  $a_1 \upto a_n \in \mm$ be elements from its maximal ideal. Then
  the~$a_i$ are analytically dependent.
\end{corollary}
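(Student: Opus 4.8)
The plan is to extract analytic dependence from $<$-dependence for the graded lexicographic order, and then to repair a homogeneity defect. Write $I := (a_1 \upto a_n) \subseteq \mm$ and let~$<$ denote the graded lexicographic monomial order on~$n$ variables, which is a rational monomial order. Since $\dimv(\A) < n$, Theorem~\ref{tVdim}\eqref{tVdimA} tells us that $a_1 \upto a_n$ is $<$-dependent, so there is a polynomial $P \in \A[X_1 \upto X_n]$ with $P(a_1 \upto a_n) = 0$ and $\lc_<(P) = 1$. Because~$<$ compares monomials first by total degree, the leading monomial $m_\ast := \lm_<(P)$ has the least total degree, say~$d$, among all monomials occurring in~$P$; moreover $d \ge 1$, for otherwise the constant term of~$P$ would equal~$1$ while every other monomial of~$P$ has positive degree, forcing $1 \in I$, a contradiction.

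First I would split off the lowest homogeneous part: write $P = P_d + R$, where $P_d$ is homogeneous of degree~$d$ (so the coefficient of $m_\ast$ in $P_d$ is still~$1$) and every monomial of~$R$ has degree $> d$. Evaluating, $R(a_1 \upto a_n) \in I^{d+1}$, hence $P_d(a_1 \upto a_n) = -R(a_1 \upto a_n) \in I^{d+1}$. Since $I^{d+1} = I \cdot I^d$ and $I^d$ is generated as an $\A$-module by the values at $a_1 \upto a_n$ of the monomials of degree~$d$, we may write
\[
  P_d(a_1 \upto a_n) = \sum_k c_k\, m_k(a_1 \upto a_n),
\]
where $c_k \in I$ and each $m_k$ is a monomial of degree~$d$.

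Finally I would set $F := P_d - \sum_k c_k m_k \in \A[X_1 \upto X_n]$. By construction~$F$ is homogeneous of degree~$d$ and vanishes at $a_1 \upto a_n$, while the coefficient of $m_\ast$ in~$F$ equals $1 - c$ with $c := \sum_{m_k = m_\ast} c_k \in I \subseteq \mm$; since~$\A$ is local, $1 - c$ is a unit. Thus~$F$ is a nonzero homogeneous polynomial vanishing at $a_1 \upto a_n$ with a coefficient outside~$\mm$, which is precisely what it means for $a_1 \upto a_n$ to be analytically dependent.

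I expect the only genuine point to be the passage from the inexact relation $P_d(a_1 \upto a_n) \in I^{d+1}$ to an exact \emph{homogeneous} relation: the correction term $\sum_k c_k m_k$ achieves this precisely because its coefficients lie in~$\mm$ and hence cannot cancel the unit coefficient at~$m_\ast$. The remaining ingredients --- choosing the graded lexicographic order, invoking Theorem~\ref{tVdim}, and the elementary $I$-adic bookkeeping --- are all routine.
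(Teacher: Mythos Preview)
Your proof is correct and follows essentially the same approach as the paper: apply Theorem~\ref{tVdim}\eqref{tVdimA} for a graded (pre)order to obtain a relation whose lowest-degree homogeneous part has a monomial with coefficient~$1$, then correct by terms with coefficients in~$\mm$ to produce a homogeneous relation retaining a unit coefficient. The only cosmetic differences are that the paper uses the pure total-degree preorder $M=(1,\ldots,1)$ rather than graded lex, and phrases the homogenization step as ``partial evaluation'' of the higher-degree monomials rather than your explicit $I^{d+1}$-bookkeeping---but the underlying mechanism is identical.
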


\begin{proof}
  Applying Theorem~\ref{tVdim}\eqref{tVdimB} to the monomial preorder
  given by the matrix $M = (1,1 \upto 1)$ yields a polynomial
  $P \in \A[X_1 \upto X_n]$ vanishing at $a_1 \upto a_n$ such that the
  homogeneous part of $P$ of least degree~$d_0$ has a monomial~$t_0$
  whose coefficient is~$1$. We now turn $P$ into a homogeneous
  polynomial of degree~$d_0$ by ``partially evaluating'' it. More
  precisely, we split each monomial (of degree~$d$, say) into
  monomials of degree~$d_0$ and~$d - d_0$, and then evaluate the one
  of degree~$d - d_0$ at the~$a_i$. The resulting homogeneous
  polynomial also vanishes at the~$a_i$. The process may have changed
  the coefficient of~$t_0$, but only by adding an $\A$-linear
  combination of nonempty products of the~$a_i$. Since~$a_i \in \mm$,
  the coefficient of $t_0$ is not in~$\mm$, and corollary follows.
\end{proof}

The ring $\R$, constructed at the beginning of
Section~\ref{sNonNoetherian}, provides an example showing that in the
above result, the valuative dimension cannot be replaced by the Krull
dimension. Indeed, Proposition~\ref{pR}\eqref{pRb} shows that there
are two elements that are independent with respect to the preorder~$<$
given by $M = (1,1)$. Being independent, they must lie in the maximal
ideal of $\R$, and being $<$-independent, they are analytically
independent. Explicitly, two such elements are~$v$ and~$u v$.

On the other hand, the maximal number of analytically independent
elements may also be less than the Krull dimension. For example, if
$\A$ is a valuation domain, than any two elements are analytically
dependent (with an equation of degree~$1$); but $\A$ may have
dimension $> 1$, as exemplified by the ring $\W$ from this
note. By~\cite[Theorem~14.5]{Matsumura:86}, this cannot happen for
Noetherian rings.

\end{document}